\newenvironment{subex}[1]{%
\nobreak\vspace{-4pt}\nobreak\begin{multicols}{#1}%
\begin{enumerate}[label=(\arabic*),leftmargin=*,font=\normalfont]\itemsep=1pt%
}{\end{enumerate}\end{multicols}\vspace{-8pt}}
\numberwithin{equation}{section}
\def\C{{\mathbb C}}
\def\N{{\mathbb N}}
\def\R{{\mathbb R}}
\def\Z{{\mathbb Z}}
\def\ee{\mathrm{e}}
\def\ii{\mathrm{i}}
\DeclareMathOperator\Real{Re}
\def\ds{\displaystyle}
\newcommand{\high}{{\vphantom{|^|}}}
\newcommand{\low}{{\vphantom{|_|}}}
\theoremstyle{plain}
\newtheorem{theorem}{Theorem}
\newtheorem{lemma}{Lemma}
\theoremstyle{definition}
\renewcommand{\thefootnote}{\fnsymbol{footnote}}
\begin{document}

\title[Analyticity of dissipative-dispersive systems
in higher dimensions]{Analyticity of the attractors \\ of dissipative-dispersive systems
in higher dimensions}

\author[C.\ Evripidou]{Charalampos Evripidou}
\address{Department of Mathematics and Statistics, University of Cyprus, P.O. Box 20537, 1678 Nicosia, Cyprus\\
Department of Mathematics and Statistics, La Trobe University, Melbourne, Victoria 3086, Australia}
\email{cevrip02@ucy.ac.cy, c.evripidou@latrobe.edu.au}

\author[Y.-S.\ Smyrlis]{Yiorgos-Sokratis Smyrlis$^\dag$}
\address{Department of Mathematics and Statistics, University of Cyprus,
P.O. Box 20537, 1678 Nicosia, Cyprus}
\email{smyrlis@ucy.ac.cy}

\footnotetext[2]{The work of Y.-S. Smyrlis was supported
by an internal grant of the University of Cyprus.}

\renewcommand{\thefootnote}{\oldstylenums{\arabic{footnote}}}

\date{\today}

\begin{abstract}
We investigate the analyticity of the attractors of a class
of Kuramoto--Siva\-shinsky type pseudo-differential equations
in higher dimensions, which are periodic in all spatial
variables and possess a universal attractor. This is done
by fine-tuning the techniques used in \cite{APS2013,IS2016},
which are based on an analytic extensibility criterion
involving the growth of $\nabla^n u$, as $n$ tends to
infinity (here $u$ is the solution). These techniques can
now be utilised in a variety of higher dimensional equations
possessing universal attractors, including Topper--Kawahara
equation, Frenkel--Indireshkumar equations and their
dispersively modified analogs. We prove that the solutions
are analytic whenever $\gamma$, the {\it order of dissipation}
of the pseudo-differential operator,  is higher than one.
We believe that this estimate is optimal, based on numerical evidence. 
\end{abstract}

\subjclass[2010]{Primary: 35B65; Secondary: 35Q35.}

\keywords{Kuramoto--Sivashinsky equation, dissipative-dispersive evolutionary
equations, analyticity of solutions of partial differential equations, global attractors.}

\maketitle

\section{Introduction}

%

\noindent We are concerned with analyticity properties of solutions 
of pseudo-differential equations of the form
\begin{equation}
u_t+uu_{x_1}+{\mathcal P}u\,=\,0,
\label{eq:1}
\end{equation}
where $u=u(x_1,\ldots,x_d,t)$
with periodic initial data. 
We study equations in which such periodic 
initial value problems possess a universal attractor.
The pseudo-differential operator $\mathcal P$ is defined by its symbol
in the Fourier space. If the period in every spatial variable is $L$ 
then we assume that
$$
\big(\widehat{{\mathcal P}w} \big)_{\bm{k}}\,=\,\lambda_{\bm{k}} 
\,\hat{w}_{\bm{k}}, \quad \bm{k}=(k_1,\ldots,k_d)\in\Z^d,
$$
whenever 
$\,w(x_1,\ldots,x_k)=w(\bm{x})=\sum_{\bm{k}\in\Z^d}\hat{w}_{\bm{k}}\,\ee^{\ii q\bm{k}\cdot\bm{x}}\,$ 
and $\,q=2\pi/L$. 
We further assume that the eigenvalues $\lambda_{\bm{k}}$ satisfy the condition
\begin{equation}
\Real \lambda_{\bm{k}} \, \ge \, c_1 |\bm{k}|^\gamma-\mu, \quad \text{\,for all \, $\bm{k}\in\Z^d$},
\label{eq:eig}
\end{equation}
for some positive constants $\,c_1$, $\gamma$ and $\mu>0$.
This $\gamma$ is a lower bound of the order of the dissipative part of $\mathcal P$.
Our target, in this work, is to obtain an optimum lower bound of $\gamma$, which guarantees
analyticity for the universal attractor of \eqref{eq:1}.

\smallskip
\noindent The most typical example of an equation of the the form \eqref{eq:1}, when $d=1$,
is the Kuramoto-Sivashinsky (KS) equation
$$
u_t+uu_x+u_{xx}+u_{xxxx}=0,
$$
usually accompanied by $L-$periodic initial data. 
It is derived from a variety of physical models including
free surface film flows, 
two-phase flows in cylindrical and plane geometries flame-front instabilities
and reaction diffusion combustion dynamics
(see \cite{APS2013,CPS95} and references therein.)


\smallskip
\noindent There is a variety of higher dimensional physical models described by 
equation of the form \eqref{eq:1}. One such equation, when $d=2$,
is the Topper-Kawahara equation \cite{ToKa1978}
\begin{equation*}
u_t+uu_x+\alpha_1 u_{xx}+\alpha_2\Delta u+\alpha_3 \Delta^2 u+\alpha_4 \Delta u_{x} \,=\, 0,
\label{eq:KaTo}
\end{equation*}
where $u=u(x,y,t)$, which describes the evolution of liquid interface. Here 
$x$ is in the direction of the flow while $y$ is the transverse coordinate and $\alpha_3>0$. 
The particular case of this equation, where $\alpha_1=\alpha_3=1$ and $\alpha_2=0$, 
has been derived by Frenkel and Indireshkumar \cite{FI1999}.
In the case of interfacial instability of rotating core-annular flow a model also retaining dispersive effects 
has been derived by Coward and Hall \cite{CoHa1993} and takes the form
\begin{equation}
u_t+uu_x+\alpha\Delta u+\Delta^2 u + \delta {\mathcal D}u \,=\, 0, \label{eq:2d-D}
\end{equation}
where 
the two-dimensional pseudo-differential operator ${\mathcal D}$ is best represented in terms 
of its symbol in Fourier space given by 
$\, \widehat{{\mathcal D}u}(\xi,\eta)={\mathcal N}(\xi,\eta)\hat{u}(\xi,\eta)$, where
\begin{align*}
{\mathcal N}(\xi,\eta) =\,\,& \frac{2\ii\eta^2 I_\eta \big(\xi I_\eta^2-2\eta I_{\eta +1}
I_\eta -\xi I_\eta I_{\eta +1}\big)}{2\xi I_{\eta +1}^2I_{\eta -1}-\xi I_{\eta }^2I_{\eta -1} 
-\xi I_{\eta }^2I_{\eta +1}+2(2+\eta )I_\eta I_{\eta +1}I_{\eta -1}} \\ &+
\frac{ \ii \xi^2 I_{\eta +1} \big(\xi I_\eta I_{\eta -1} -2(\eta -2)I_{\eta -1}I_{\eta +1} 
-\xi I_\eta I_{\eta +1}\big)}
{2\xi I_{\eta +1}^2 I_{\eta -1}-\xi I_\eta^2 I_{\eta -1}-\xi I_\eta^2 I_{\eta +1} 
+2(2+\eta )I_\eta I_{\eta +1}I_{\eta -1}}+\ii\xi \eta,
\end{align*}
where $\xi,\eta$ denote the wave numbers in the Fourier transforms in the $x$ and $y$ directions, respectively, and 
$\, I_\eta=I_\eta(\xi)\,$ denotes the modified Bessel function of the first kind of order $\eta$ with $\,\eta\in\Z$.



\medskip
\noindent Pinto in \cite{Pinto1999,Pinto2001} studied the periodic initial value problem of\begin{equation}
u_t + uu_x + u_{xx} + \Delta^2 u =0,
\label{eq:Pinto}
\end{equation}
and proved existence of solutions, the existence of a universal attractor, and analyticity of solutions.
Sharp numerical estimates for the size of the attractor of \eqref{eq:Pinto} 
are presented in \cite{AKPS}, suggesting that
$\,\limsup_{t\rightarrow\infty} \| u(\cdot,\cdot,t) \| =\
\mathcal{O}(L^2),$
where $L$ is the length of the common spatial period.
Demirkaya \cite{De2009} studied the following 
two dimensional variation of the KS equation
\begin{equation*}
u_t +\Delta u + \Delta^2u  + uu_x + uu_y - g(x) \,=\, 0,
\end{equation*}
with $2\pi-$periodic initial data in both variables,
and established the global well-posedness and the existence of 
a universal attractor.
Ioakim \& Smyrlis \cite{IS2016} established the analyticity of the 
universal attractor of \eqref{eq:1} with periodic initial data in two space dimensions,
provided that $\gamma>3$.

\medskip
\noindent In what follows, we establish the analyticity of the universal attractor of \eqref{eq:1} 
when $\gamma>1$, by fine-tuning the methods presented in \cite{IS2016}.
Numerical experiments presented in \cite{Sm} suggest that this
bound is optimal.
 
\section{A criterion which guarantees analytic extensibility}

If  $f : \R^d \rightarrow \C$ is real analytic and $L-$periodic in every one of its $d$
variables, then it extends
as a holomorphic function, in each of its $d$ complex variables, in a domain of the form
\begin{equation}
W_\beta \,=\, \big\{ (z_1,\ldots,z_d)\in\C^d\,:\,|\mathrm{Im}\,z_j|<\beta\,\,\,
\text{for $j=1,\ldots,d$}
\big\}\subset\C^d,
\label{Wb}
\end{equation}
for some $\,\beta>0$.
The maximum such $\,\beta \!\in\! (0,\infty]$ we call {\it band of analyticity} of $f$.
We shall obtain a lower bound of the band of analyticity  
from the rate of growth, as a function of $s$, of the seminorm
\begin{equation}
\|f\|_{s,\infty}\,=\, \sup_{\bm{k}\in\Z^d} |\bm{k}|^s|\hat f_{\bm{k}}|,\quad
\text{where 
$\,\,\ds\hat{f}_{\bm{k}}\,=\,\frac{1}{L}\int_{[0,L]^d} f(\bm{x})\,{\ee}^{-\ii q\bm{k}\cdot\bm{x}} d\bm{x},
\,\,\,q=\frac{2\pi}{L}\,$}
\label{eq:seminorm}
\end{equation}
when $s$ tends to infinity.



\begin{lemma}
\label{lemma:bound}
Let $f:\R^k \rightarrow \C$ be a $\mathrm{C}^\infty$-function which is $2\pi$-periodic
in every argument and for which there exist $M,a>0$ such that
$$
\|f\|_{s,\infty}\leq M(as)^s, \text{ for all } s>0.
$$
Then $f$ extends holomorphically in $W_\beta$, where $\beta$ satisfies
$\beta \geq \ds\frac{1^\high}{\ee a\low}$.
\end{lemma}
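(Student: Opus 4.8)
\noindent\emph{Proof idea.}\ The plan is to argue through the Fourier expansion of $f$. Since $f$ is $\mathrm{C}^\infty$ and $2\pi$-periodic in each variable it equals the sum of its Fourier series, and to prove the lemma it suffices to show that for every $\beta'<1/(\ee a)$ the series $\sum_{\bm k\in\Z^k}\hat f_{\bm k}\,\ee^{\ii\bm k\cdot\bm z}$ converges absolutely and uniformly on compact subsets of $W_{\beta'}$. Each partial sum is a trigonometric polynomial, hence entire in $\bm z$, so a locally uniform limit of such sums is holomorphic on $W_{\beta'}$; letting $\beta'\uparrow 1/(\ee a)$ then shows that the band of analyticity of $f$ is at least $1/(\ee a)$, which is the assertion.

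The heart of the matter is to convert the growth bound $\|f\|_{s,\infty}\le M(as)^s$ into exponential decay of the Fourier coefficients. Fix $\bm k\neq\bm 0$. Applying the hypothesis with an arbitrary \emph{real} exponent $s>0$ gives $|\hat f_{\bm k}|\le M\big(as/|\bm k|\big)^s$, and I would minimise the right-hand side over $s\in(0,\infty)$. Writing it as $M\exp\!\big(s\log(as/|\bm k|)\big)$, the exponent $s\log(as/|\bm k|)$ attains its minimum $-|\bm k|/(\ee a)$ at $s=|\bm k|/(\ee a)$, so that
\[
|\hat f_{\bm k}|\ \le\ M\,\ee^{-|\bm k|/(\ee a)}\qquad\text{for every }\bm k\in\Z^k ,
\]
the case $\bm k=\bm 0$ following by letting $s\downarrow 0$. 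This optimisation is exactly where the constant $1/(\ee a)$ is produced, and it is available only because the hypothesis is assumed for all positive real $s$, not merely for integers.

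It then remains to insert this estimate into the series at complex arguments. For $\bm z=\bm x+\ii\bm y$ with $|y_j|<\beta'$ for every $j$ one has $|\ee^{\ii\bm k\cdot\bm z}|=\ee^{-\bm k\cdot\bm y}\le\ee^{\beta'|\bm k|}$, so the general term of the series is dominated by $M\,\ee^{-\delta|\bm k|}$ with $\delta:=1/(\ee a)-\beta'>0$. Since $\sum_{\bm k\in\Z^k}\ee^{-\delta|\bm k|}<\infty$ for every $\delta>0$, the Fourier series of $f$ converges absolutely and uniformly on $W_{\beta'}$, and the argument of the first paragraph finishes the proof.

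I do not expect a genuine obstacle here: the reasoning is of Paley--Wiener type and each step is elementary. The one point needing care is the $s$-optimisation — one must exploit the hypothesis over the whole continuum of exponents and check that $s=|\bm k|/(\ee a)$ is the global minimiser of $(as/|\bm k|)^s$ (immediate from the convexity of $t\mapsto t\log t$). A secondary point is that the exponential gain $\ee^{-|\bm k|/(\ee a)}$ must strictly dominate the growth $\ee^{\beta'|\bm k|}$ coming from the imaginary parts of $\bm z$, which is why one only obtains the estimate on the \emph{open} strips $W_{\beta'}$ and passes to the supremum $\beta'\uparrow 1/(\ee a)$ at the end.
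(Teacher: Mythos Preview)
Your argument is correct, and it is a genuinely different route from the one in the paper. The paper does not optimise over the real parameter $s$ to obtain exponential decay of $\hat f_{\bm k}$; instead it writes the holomorphic extension as a Taylor series in the imaginary directions,
\[
g(\bm x+\ii\bm y)=\sum_{n\ge 0}\sum_{|\bm a|=n}\frac{D^{\bm a}f(\bm x)(\ii\bm y)^{\bm a}}{\bm a!},
\]
bounds $\sum_{|\bm a|=n}\frac{1}{\bm a!}|D^{\bm a}f(\bm x)|$ by $\frac{1}{n!}\|f\|_{n,1}$ via the multinomial identity $\sum_{|\bm a|=n}\frac{|\bm k^{\bm a}|}{\bm a!}=\frac{|\bm k|^n}{n!}$, passes from $\|\cdot\|_{n,1}$ to $\|\cdot\|_{n+\lambda,\infty}$ (with $\lambda>d$), and then uses the hypothesis only at the shifted integer exponents $s=n+\lambda$ together with a Stirling-type asymptotic to show the resulting series converges when $\beta a\ee<1$.

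Your approach is shorter and more transparent: the single minimisation $\min_{s>0}(as/|\bm k|)^s=\ee^{-|\bm k|/(\ee a)}$ produces the constant $1/(\ee a)$ directly, and the remaining estimate $\sum_{\bm k}\ee^{-\delta|\bm k|}<\infty$ is immediate since $|\bm k|=|k_1|+\cdots+|k_d|$ factorises the sum. It also makes clear that the hypothesis for \emph{real} $s$ is what pins down the sharp constant. The paper's argument, by contrast, uses only integer (shifted) values of $s$ and a bit more machinery, but has the minor virtue of exhibiting the extension explicitly as a convergent power series in $\bm y$ and of showing directly that it satisfies the Cauchy--Riemann equations, rather than appealing to uniform limits of entire functions.
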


\proof See Appendix. \hfill $\Box$

\section{Analyticity for higher dimensional models}


The main result of this paper is the following theorem:

\begin{theorem}
Assume that equation \eqref{eq:1}, with $2\pi-$periodic initial data 
in all its $d$ variables, possesses 
a universal attractor ${\mathcal V}$  and also that
the pseudo-differential operator $\mathcal P$ satisfies the condition
\eqref{eq:eig}, for some positive constants $\mu,c_1$ and some $\gamma>1$.
Then, there exists a $\beta>0$, such that every $w \in {\mathcal V}$ 
extends to a holomorphic function in $W_\beta$.
\end{theorem}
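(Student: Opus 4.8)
The plan is to exploit the analytic extensibility criterion of Lemma~\ref{lemma:bound}: it suffices to show that every $w\in{\mathcal V}$ satisfies a bound of the form $\|w\|_{s,\infty}\le M(as)^s$ for all $s>0$, with $M,a$ independent of $w$ (uniformity over the attractor is automatic once the constants depend only on the structural data $c_1,\gamma,\mu$ and an a~priori bound on $\|w\|_{L^\infty}$ or $\|w\|_{L^2}$, which the attractor provides since ${\mathcal V}$ is bounded and invariant). Since points of the attractor lie on complete bounded trajectories, I would fix $w\in{\mathcal V}$, take a solution $u(\cdot,t)$ with $u(\cdot,0)=w$ that exists and stays in (a bounded neighbourhood of) ${\mathcal V}$ for all $t\in\R$, and run a Gevrey-type estimate backward/forward in time to extract the seminorm bound on $w$ itself. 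Concretely, following \cite{IS2016}, I would work with the time-dependent family of weighted seminorms
\begin{equation*}
\Phi_s(t)\,=\,\sup_{\bm k\in\Z^d}\,|\bm k|^s\,\ee^{-\tau(t)|\bm k|^{\gamma_0}}\,|\hat u_{\bm k}(t)|,
\end{equation*}
where $\gamma_0\in(1,\gamma]$ (or simply $\gamma_0=1$, which is the borderline case the theorem is built around) and $\tau(t)$ is a small increasing weight to be chosen, and derive a differential inequality for $\Phi_s$.

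The key steps, in order, are: (i) write the mode equations $\dot{\hat u}_{\bm k}+\lambda_{\bm k}\hat u_{\bm k}+\tfrac{\ii q}{2}\sum_{\bm j+\bm m=\bm k} k_1^{(\bm j)}\,\hat u_{\bm j}\hat u_{\bm m}=0$ coming from \eqref{eq:1} (being careful that the nonlinearity is $uu_{x_1}=\tfrac12(u^2)_{x_1}$, so only the first component of the wave vector enters the derivative factor); (ii) use \eqref{eq:eig} to get the dissipative gain $\Real\lambda_{\bm k}\ge c_1|\bm k|^\gamma-\mu$, which for $\gamma>1$ dominates, for $|\bm k|$ large, both the weight-growth term $\dot\tau|\bm k|^{\gamma_0}$ and the loss coming from differentiating the $|\bm k|^s$ factor through the convolution; (iii) estimate the nonlinear convolution term: split $\bm k=\bm j+\bm m$ into the regimes $|\bm j|\le|\bm m|$ and $|\bm j|\ge|\bm m|$, bound the low-frequency factor by $\|u\|_{L^1\text{-type}}$ or an already-controlled lower seminorm (e.g. $\|u\|_{0,\infty}$ together with summability in $d$ dimensions, which is where one pays the dimension-dependent constant but no extra order of smoothness), and bound $|\bm k|^s\le C^s(|\bm j|^s+|\bm m|^s)$ to pass $s$ onto the high-frequency factor; crucially, the factor $|k_1|\le|\bm k|$ lost to the derivative in the nonlinearity costs one power of $|\bm k|$, which is exactly absorbed by one power of the dissipation provided $\gamma>1$ — this is the heart of why the hypothesis $\gamma>1$ (and not $\gamma>$ anything larger, improving on the $\gamma>3$ of \cite{IS2016}) is the right one; (iv) close the resulting inequality $\dot\Phi_s\le (\text{const}\cdot s - c\,\tau^{-\theta}\cdot(\cdots))\Phi_s + (\text{lower order})$ by choosing $\tau(t)$ growing linearly with a sufficiently small slope, obtaining $\Phi_s(t)\le \Phi_s(0)\cdot(\text{harmless})$ and hence $\sup_{\bm k}|\bm k|^s\ee^{-\tau|\bm k|}|\hat u_{\bm k}(t)|$ bounded; (v) optimise in $\bm k$: from $|\bm k|^s\ee^{-\tau|\bm k|}\le (s/(\ee\tau))^s$ one reads off $\|u(\cdot,t)\|_{s,\infty}\le M\,(s/(\ee\tau(t)))^s$, which is precisely the hypothesis of Lemma~\ref{lemma:bound} with $a=1/(\ee\tau(t))$, giving band of analyticity $\beta\ge\tau(t)$ for each fixed positive time; (vi) finally, for $w\in{\mathcal V}$ itself, use backward-time existence on the attractor: $w=u(\cdot,0)$ where $u(\cdot,-T)\in{\mathcal V}$ for $T>0$, so applying the above with initial time $-T$ and running to time $0$ yields the uniform Gevrey bound on $w$ with $a$ depending only on $T$ (fixed, e.g. $T=1$) and the structural constants; then Lemma~\ref{lemma:bound} delivers the common $\beta>0$.

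The main obstacle I anticipate is step~(iii)–(iv): making the nonlinear term's contribution genuinely subordinate to the dissipation uniformly in $s$ when $\gamma$ is only slightly larger than $1$. Differentiating the convolution $s$ times spreads the $|\bm k|^s$ weight, and the naive bound loses a full power $|\bm k|$ from the $x_1$-derivative plus a combinatorial $C^s$ from the binomial split; one must show the dissipative term $c_1|\bm k|^\gamma$ beats $C s\,|\bm k|^{?}$ after the optimisation in $\bm k$, which forces a careful bookkeeping of how the weight $\ee^{-\tau|\bm k|^{\gamma_0}}$ interacts with the convolution (the triangle-type inequality $\ee^{-\tau|\bm k|^{\gamma_0}}\le \ee^{-\tau(|\bm j|^{\gamma_0}+|\bm m|^{\gamma_0})}\cdot(\text{bounded defect})$ holds cleanly only for $\gamma_0\le 1$, which is why one takes $\gamma_0=1$, and then one needs $\gamma>1$ to have any strictly positive margin $\gamma-\gamma_0>0$ to absorb the lost derivative). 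Managing the dimension $d$ here is the secondary nuisance: the sum $\sum_{\bm j\in\Z^d}(1+|\bm j|)^{-N}$ converges only for $N>d$, so one must carry a few extra ambient powers of $|\bm k|$ in the low-frequency factor; these, too, are absorbed by the dissipation since $\gamma>1>0$, but they make the constants $M,a$ depend on $d$. Once these estimates are in place, the passage through Lemma~\ref{lemma:bound} and the backward-time argument on ${\mathcal V}$ are routine.
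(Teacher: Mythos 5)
Your plan is a genuinely different route from the paper's: you propose the classical Foias--Temam Gevrey-norm machinery (time-dependent exponential weights $\ee^{\tau(t)|\bm k|}$, a differential inequality for the weighted norm, backward-time trajectories on the attractor), whereas the paper never introduces exponential weights at all. The paper works with the fixed polynomial renormalisation $\hat w_{\bm k}=(|k|+|\ell|)^3\hat u_{\bm k}$ (the cube being there purely so that the convolution kernels of Lemma~\ref{lem:2} are summable in $d=2$), takes $\limsup_{t\to\infty}$ in the Duhamel formula to land directly on the attractor, and runs a bootstrap in the exponent $s$ of the seminorm $h(s)=\limsup_t\sup_{\bm k}(|k|+|\ell|)^s|\hat w_{k,\ell}|$: the induction hypothesis $h(s)\le M(as)^s$ on $[0,\sigma]$ is upgraded to $[0,\sigma+\gamma-1]$, the increment $\gamma-1>0$ being exactly where the hypothesis $\gamma>1$ enters (together with the final choice $a\ge R^{1/(\gamma-1)}/(\gamma-1)$). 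The engine of that upgrade is the interpolation inequality \eqref{ineq}, which distributes the available decay $(a\sigma)^\sigma(k+\ell)^{-\sigma}$ between the two factors $\hat w_{j,m}$ and $\hat w_{k-j,\ell-m}$ of the convolution in proportion to their frequencies $r/(k+\ell)$, so that the product reconstitutes the full power $\sigma$ with no loss; one power of $(k+\ell)$ is then spent on the derivative in $uu_{x_1}$ and $\gamma$ powers are gained from \eqref{eq:eig}. Your approach, by contrast, distributes an exponential weight via the triangle inequality $\ee^{\tau|\bm k|}\le\ee^{\tau|\bm j|}\ee^{\tau|\bm k-\bm j|}$ and buys the band of analyticity as $\tau(t)$ itself; it is in principle capable of reaching $\gamma>1$ and would arguably give a more quantitative band, but it is not what the paper does.

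That said, as written your proposal has two concrete gaps. First, the central object is defined with the wrong sign: a bound on $\sup_{\bm k}|\bm k|^s\ee^{-\tau|\bm k|^{\gamma_0}}|\hat u_{\bm k}|$ with $\tau>0$ is \emph{weaker} than a bound on $\|u\|_{s,\infty}$ and implies nothing about analyticity; for your step (v) to work you need control of $\sup_{\bm k}\ee^{+\tau|\bm k|}|\hat u_{\bm k}|$ (and then the prefactor $|\bm k|^s$ in the definition is redundant, since the Gevrey bound for a single $\tau$ already yields $\|u\|_{s,\infty}\le M(s/(\ee\tau))^s$ for all $s$ at once). Second, the step you yourself flag as the main obstacle --- closing the nonlinear estimate --- is asserted rather than carried out, and the difficulty is not quite the one you name. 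After the exponential weight is distributed, the mode-$\bm k$ inequality has the schematic form $\limsup_t|v_{\bm k}|\le(C|\bm k|\,\|v\|_\infty^2+\mu\|v\|_\infty)/(c_1|\bm k|^\gamma-\dot\tau|\bm k|-\mu)$, which is quadratic in the unknown $\|v\|_\infty$; making this close requires a continuity-in-time (or smallness-of-$\tau$) argument to keep the Gevrey norm below a threshold on a time interval of uniform length, and that argument is absent from your plan. There is no issue with $s$-dependent combinatorial losses in the Gevrey framework (the exponential weight handles all $s$ simultaneously), so your worry about ``$C^s$ from the binomial split'' is misplaced, but the quadratic closure is essential and missing. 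The paper sidesteps both issues: its bootstrap is linear in the new unknown $h(s)$ for $s\in(\sigma,\sigma+\gamma-1]$ because the convolution factors are always estimated at exponents already covered by the induction hypothesis, at the price of the more delicate inequality \eqref{ineq} and the verification of Lemma~\ref{lem:2}.
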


\begin{proof}
For simplicity, we do the proof in the two variable case ($d = 2$).
The extension to higher dimension follows the same steps. We denote by  $x$
and $y$ the  spatial
variables and by $(k,\ell)$ the elements of $\Z^2$.
Equation \eqref{eq:1} is transformed into the following  infinite dimensional
dynamical system
\begin{equation}
\frac{d}{dt}\hat{u}_{k,\ell} = -\lambda_{k,\ell} \hat{u}_{k,\ell} -\ii k\hat{\varphi}_{k,\ell},
\quad k,\ell\in\Z
\label{IDDS}
\end{equation}
where 
$\,u(x,y,t)= \sum_{k,\ell\in\Z}\hat{u}_{k,\ell}(t)\,\ee^{\ii(kx+\ell y)},\,$
and for $k,\ell\geq 0$, we have
\begin{gather*}
\hat{\varphi}_{k,\ell}(t) =\, \frac{1}{(2\pi)^2} \int_0^{2\pi} \int_0^{2\pi} \frac{1}{2}
u^2(x,y,t)\,\ee^{-\ii(kx+\ell y)}\,dxdy =\\
\frac{1}{2}\sum_{j=0}^k\sum_{m=0}^\ell\hat{u}_{j,m}\hat{u}_{k-j,\ell-m}+
\sum_{j=0}^k\sum_{m=1}^\infty\hat{u}_{j,-m}\hat{u}_{k-j,\ell+m}+
\sum_{j=1}^\infty\sum_{m=0}^\ell\hat{u}_{-j,m}\hat{u}_{k+j,\ell-m}+\\
\sum_{j=1}^\infty\sum_{m=1}^\infty\hat{u}_{-j,-m}\hat{u}_{k+j,\ell+m}+
\sum_{j=1}^\infty\sum_{m=1}^\infty\hat{u}_{-j,\ell+m}\hat{u}_{k+j,-m}.
\label{phi kl 0}
\end{gather*}
We get a similar formula for $\hat{\varphi}_{k,\ell}(t)$ when $k\geq0$ and $\ell<0$.
If $\mu_{k,\ell}=\mu+\lambda_{k,\ell}$ then \eqref{IDDS} becomes
$$
\frac{d}{dt}\hat{u}_{k,\ell}= -\mu_{k,\ell} \hat{u}_{k,\ell}+
\left(\mu\hat{u}_{k,\ell} -\ii k\hat{\varphi}_{k,\ell}\right),
\quad k,\ell\in\Z.
$$
We introduce now $\,w=\sum_{k,\ell\in\mathbb Z}\hat{w}_{k,\ell}(t)\,\ee^{\ii(kx+\ell y)},\,$
with
$\,\hat{w}_{k,\ell}(t)=(|k|+|\ell|)^3\hat{u}_{k,\ell}(t).\,$
Then
\begin{equation}
\label{eq:w_k,l_system}
\frac{d}{dt}\hat{w}_{k,\ell}= -\mu_{k,\ell} \hat{w}_{k,\ell}+
\hat{\vartheta}_{k,\ell}, \quad k,\ell\in\Z
\end{equation}
with
$\,\hat{\vartheta}_{k,\ell}=\mu\hat{w}_{k,\ell} -
\ii k(|k|+|\ell|)^3\hat{\varphi}_{k,\ell}$,
for all $k,\ell\in\Z.$
Clearly, \eqref{eq:w_k,l_system} implies that
\begin{equation*}
\hat{w}_{k,\ell}(t) \,=\, \ee^{-\mu_{k,\ell} t}\hat{w}_{k,\ell}(0)+\int_0^t
\ee^{-\mu_{k,\ell}(t-s)}\hat{\vartheta}_{k,\ell}(s)\,ds,
\end{equation*}
and consequently
\begin{equation}
\limsup_{t\rightarrow\infty} |\hat{w}_{k,\ell}(t)| \,\le\, \frac{1}{\Real \mu_{k,\ell}}
\limsup_{t\rightarrow\infty} |\hat{\vartheta}_{k,\ell}(t)|\,\le\,
\frac{1}{c_1(|k|+|\ell|)^\gamma}\limsup_{t\rightarrow\infty} |\hat{\vartheta}_{k,\ell}(t)|.
\label{phikl}
\end{equation}
We next define
$$
h(s) = \limsup_{t\rightarrow\infty} \sup_{k,\ell \in\Z} (|k|+|\ell|)^{s}
|\hat{w}_{k,\ell}(t)|,
\quad s\in\R.
$$
We set $\,M=\limsup_{t\rightarrow\infty}\| w(\cdot,\cdot,t) \|_0$. Then $h(0)\le M$.

\smallskip
\noindent We shall now establish the following {\it boot-strap} type argument: 

\smallskip
\noindent Assuming that for some $\sigma\ge0$
$$
h(s)\le M(as)^s\quad\text{for all $s\in [0,\sigma]$}
$$
we will show\footnote{Convention. The value of $(as)^s$ is equal to 1, when $s=0$.}
that $h(s)\le M(as)^s$ for all $s\in [0,\sigma+\gamma-1]$, and
consequently that $\,h(s)\le M(as)^s$ for all $s\ge 0$,
and finally the Theorem will be a corollary of Lemma \ref{lemma:bound}.

\smallskip
\noindent From our assumption we have, for every $k,\ell\in \Z
\text{ with } |k|+|\ell|\neq 0 \text{ and } s \!\in\! [0,\sigma]$, that
\begin{equation}
\label{w_est}
\limsup_{t\rightarrow\infty}|\hat{w}_{k,\ell}(t)|\le \frac{h(s)}{(|k|+|\ell|)^{s}}
\le \frac{M(as)^s}{(|k|+|\ell|)^{s}}.
\end{equation}
In order to prove that $h(s)\le M(as)^s$ for all $s\in [0,\sigma+\gamma-1]$
we will make a better estimate of $\limsup_{t\rightarrow\infty}|\hat{w}_{k,\ell}(t)|$
rather than \eqref{w_est}. It is evident that we may,
without loss of generality, assume $k,\ell\ge0$.
We will make use of the following useful inequality
\begin{equation}
\label{ineq}
\limsup_{t\rightarrow\infty}|\hat{w}_{j,m}(t)|\le
\frac{M(a\sigma\frac{r}{k+\ell})^{\sigma\frac{r}{k+\ell}}}{(j+m)^{\sigma\frac{r}{k+\ell}}}\le
M\left(\frac{a\sigma}{k+\ell}\right)^{\sigma\frac{r}{k+\ell}},
\end{equation}
for every $j,m,k,\ell,r\in \N$ with $0\le r \le \min\{j+m,k+\ell\}$.
We have
\begin{gather}
|\hat{\vartheta}_{k,\ell}(t)|\le
\mu|\hat{w}_{k,\ell}(t)| +
(k+\ell)\left(\frac{1}{2}\underset{0<j+m<k+\ell}
{\sum_{j=0}^k\sum_{m=0}^\ell}(k+\ell)^3\frac{|\hat{w}_{j,m}(t)|}{(j+m)^3}
\frac{|\hat{w}_{k-j,\ell-m}(t)|}{(k-j+\ell-m)^3}\right.\nonumber\\
+\sum_{j=0}^k\sum_{m=1}^\infty
(k+\ell)^3\frac{|\hat{w}_{j,m}(t)|}{(j+m)^3}
\frac{|\hat{w}_{k-j,\ell+m}(t)|}{(k-j+\ell+m)^3}+
\sum_{j=1}^\infty\sum_{m=0}^\ell
(k+\ell)^3\frac{|\hat{w}_{j,m}(t)|}{(j+m)^3}
\frac{|\hat{w}_{k+j,\ell-m}(t)|}{(k+j+\ell-m)^3}\nonumber\\
\label{ineq_theta}
+\left.\sum_{j=1}^\infty\sum_{m=1}^\infty
(k+\ell)^3\frac{|\hat{w}_{j,m}(t)|}{(j+m)^3}
\frac{|\hat{w}_{k+j,\ell+m}(t)|}{(k+j+\ell+m)^3}+
\sum_{j=1}^\infty\sum_{m=1}^\infty
(k+\ell)^3\frac{|\hat{w}_{j,\ell+m}(t)|}{(j+\ell+m)^3}
\frac{|\hat{w}_{k+j,m}(t)|}{(k+j+m)^3}\right).
\end{gather}
The sums appearing in the previous inequality are estimated as follows.
For the first sum, using \eqref{ineq} with $r=j+m$ for $\hat{w}_{j,m}(t)$
and with $r=k+\ell-j-m$ for the term $\hat{w}_{k-j,\ell-m}(t)$, we get
\begin{align*}
\limsup_{t\rightarrow\infty}&
\underset{0<j+m<k+\ell}
{\sum_{j=0}^k  \sum_{m=0}^\ell}
(k+\ell)^3\frac{|\hat{w}_{j,m}(t)|}{(j+m)^3}
\frac{|\hat{w}_{k-j,\ell-m}(t)|}{(k-j+\ell-m)^3}\\
\le M^2&\underset{0<j+m<k+\ell}{\sum_{j=0}^k\sum_{m=0}^\ell}
(k+\ell)^3\frac{\left(\frac{a\sigma}{k+\ell}\right)^{\sigma\frac{j+m}{k+\ell}}}{(j+m)^3}
\frac{\left(\frac{a\sigma}{k+\ell}\right)^{\sigma\frac{k-j+\ell-m}{k+\ell}}}
{(k-j+\ell-m)^3}\\
=M^2&\frac{(a\sigma)^\sigma}{(k+\ell)^\sigma}
\underset{0<j+m<k+\ell}
{\sum_{j=0}^k\sum_{m=0}^\ell}\frac{(k+\ell)^3}{(j+m)^3(k-j+\ell-m)^3}\le
M^2\frac{(a\sigma)^\sigma}{(k+\ell)^{\sigma}}R_1
\end{align*}
where
$$
R_1=\underset{k,\ell \in\N}{\sup}
{\underset{0<j+m<k+\ell}
{\sum_{j=0}^k\sum_{m=0}^\ell}\frac{(k+\ell)^3}{(j+m)^3(k-j+\ell-m)^3}}<\infty.
$$
In Lemma \ref{lem:2} (see Appendix) we prove that $R_1$ is indeed finite.
For the second sum, using again \eqref{ineq} with $r=j$ for $\hat{w}_{j,m}(t)$
and with $r=k+\ell-j$ for the term $\hat{w}_{k-j,\ell+m}(t)$, we get
\begin{align*}
\limsup_{t\rightarrow\infty}\sum_{j=0}^k\sum_{m=1}^\infty&
(k+\ell)^3\frac{|\hat{w}_{j,m}|}{(j+m)^3}
\frac{|\hat{w}_{k-j,\ell+m}|}{(k-j+\ell+m)^3}  \\ &\le
M^2\sum_{j=0}^k\sum_{m=1}^\infty
(k+\ell)^3\frac{\left(\frac{a\sigma}{k+\ell}\right)^{\sigma\frac{j}{k+\ell}}}{(j+m)^3}
\frac{\left(\frac{a\sigma}{k+\ell}\right)^{\sigma\frac{k+\ell-j}{k+\ell}}}
{(k-j+\ell+m)^3}
=M^2\frac{(a\sigma)^\sigma}{(k+\ell)^{\sigma}}R_2,
\end{align*}
where
$$
R_2=\underset{k,\ell \in\N}{\sup}{\sum_{j=0}^k\sum_{m=1}^\infty
\frac{(k+\ell)^3}{(j+m)^3(k-j+\ell+m)^3}}<\infty.
$$
In Lemma 2 we also prove that $R_2$ is finite.
Similarly we treat the rest sums and we get for the third one
\begin{align*}
\limsup_{t\rightarrow\infty}\sum_{j=1}^\infty & \sum_{m=0}^\ell
(k+\ell)^3\frac{|\hat{w}_{j,m}|}{(j+m)^3}
\frac{|\hat{w}_{k+j,\ell-m}|}{(k+j+\ell-m)^3} \\
&\le M^2\sum_{j=1}^\infty\sum_{m=0}^\ell
(k+\ell)^3\frac{\left(\frac{a\sigma}{k+\ell}\right)^{\sigma\frac{m}{k+\ell}}}{(j+m)^3}
\frac{\left(\frac{a\sigma}{k+\ell}\right)^{\sigma\frac{k+\ell-m}{k+\ell}}}
{(k+j+\ell-m)^3}
\le M^2\frac{(a\sigma)^\sigma}{(k+\ell)^{\sigma}}R_2
\end{align*}
and for the fourth
\begin{align*}
\limsup_{t\rightarrow\infty}
\sum_{j=1}^\infty\sum_{m=1}^\infty
&(k+\ell)^3\frac{|\hat{w}_{j,m}|}{(j+m)^3}
\frac{|\hat{w}_{k+j,\ell+m}|}{(k+j+\ell+m)^3} \\
&\le
M^2\frac{(a\sigma)^\sigma}{(k+\ell)^\sigma}\sum_{j=1}^\infty\sum_{m=1}^\infty
\frac{(k+\ell)^3}{(j+m)^3(k+j+\ell+m)^3}
\le M^2\frac{(a\sigma)^\sigma}{(k+\ell)^\sigma}R_3,
\end{align*}
with
$\,
R_3=\underset{k,\ell \in\N}{\sup}{}\sum_{j=1}^\infty\sum_{m=1}^\infty
\frac{(k+\ell)^3}{(j+m)^3(k+j+\ell+m)^3},\,
$
which is bounded (see Lemma 2). Finally, for the fifth sum
\begin{align*}
\limsup_{t\rightarrow\infty}
\sum_{j=1}^\infty\sum_{m=1}^\infty &
(k+\ell)^3\frac{|\hat{w}_{j,\ell+m}|}{(j+\ell+m)^3}
\frac{|\hat{w}_{k+j,m}|}{(k+j+m)^3} \\
&\le M^2\sum_{j=1}^\infty\sum_{m=1}^\infty
(k+\ell)^3\frac{\left(\frac{a\sigma}{k+\ell}\right)^{\sigma\frac{\ell}{k+\ell}}}
{(j+\ell+m)^3}
\frac{\left(\frac{a\sigma}{k+\ell}\right)^{\sigma\frac{k}{k+\ell}}}{(k+j+m)^3}
\le M^2\frac{(a\sigma)^\sigma}{(k+\ell)^{\sigma}}R_4,
\end{align*}
where, as by Lemma 2
$$
R_4=\underset{k,\ell \in\N}{\sup}{\sum_{j=1}^\infty\sum_{m=1}^\infty
\frac{(k+\ell)^3}{(j+\ell+m)^3(k+j+m)^3}}<\infty.
$$
Summing all these and using \eqref{phikl} and \eqref{ineq_theta} we get
\begin{align*}
\limsup_{t\rightarrow\infty} |\hat{w}_{k,\ell}(t)|  
&\le
\mu M \frac{(a\sigma)^\sigma}{c_1(k+\ell)^{\sigma+\gamma}}+
M^2(k+\ell)\frac{(a\sigma)^\sigma}{c_1(k+\ell)^{\sigma+\gamma}}
\left(\frac{R_1}{2}+2R_2+R_3+R_4\right)\\
&\le R\frac{M(a\sigma)^\sigma}{(k+\ell)^{\sigma+\gamma-1}}
\end{align*}
where
$\,
R=\frac{\mu}{c_1}+\frac{M}{c_1}\left(\frac{R_1}{2}+2R_2+R_3+R_4\right)\!.
$
We get our result as soon as we can find an $a$ satisfying
$$
R(a\sigma)^\sigma\le a^{\sigma+\gamma-1}(\sigma+\gamma-1)^{\sigma+\gamma-1}.
$$
Any $a$ with $a\ge \frac{R^{\frac{1}{\gamma-1}}}{\gamma-1}$ will do the job.
\end{proof}

\appendix
\section*{Appendix}
\renewcommand{\thesection}{A}

\subsection*{Proof of Lemma \ref{lemma:bound}}  It suffices to show that the series
\begin{equation}
\label{eq:series}
g(x_1+\ii y_1,\ldots,x_d+\ii y_d)=g(\bm{x}+\ii\bm{y})=\sum_{\bm{a}\in \N^d} 
\frac{D^{\bm{a}} f(\bm{x})(\ii\bm{y})^{\bm{a}}}{\bm{a}!}=\sum_{n=0}^\infty\sum_{|\bm{a}|=n} 
\frac{D^{\bm{a}} f(\bm{x})(\ii\bm{y})^{\bm{a}}}{\bm{a}!},
\end{equation}
where $\bm{a}!=a_1!\cdots a_d!$, $D^{\bm{a}}=\partial_{x_1}^{a_1}\cdots\partial_{x_d}^{a_d}$
and $|\bm{a}|=a_1+a_2+\cdots+a_d$,
whenever $\,\bm{a}=(a_1,\ldots,a_d)$, converges in $W_\beta$ and satisfies the 
Cauchy-Riemann equations,
for every pair $(x_j,y_j)$. 
We have
$$
|g(\bm{x}+\ii\bm{y})|\leq\sum_{n=0}^\infty\beta^n\sum_{|\bm{a}|=n}
 \frac{1}{\bm{a}!} |D^{\bm{a}} f(\bm{x})|.
$$
Since
$\displaystyle\,
D^{\bm{a}} f(\bm{x})= \sum_{\bm{k} \in\Z^d} \ii^{|\bm{a}|} \bm{k}^{\bm{a}}\hat{f}_{\bm{k}}\,
\ee^{\ii \bm{k}\cdot\bm{x}} ,\,$
and
$$
\sum_{|\bm{a}|=n}\frac{|\bm{k}^{\bm{a}}|}{\bm{a}!}=
\frac{1}{n!}\sum_{a_1+\cdots+a_d=n}\frac{n!}{\alpha_1!\cdots\alpha_d!}|k_1|^{\alpha_1}\cdots |k_d|^{\alpha_d}=
\frac{1}{n!}\big(|k_1|+\cdots+|k_d|\big)^n=
\frac{|\bm{k}|^{n}}{n!},
$$
it follows that
$$
\sum_{|\bm{a}|=n} \frac{1}{\bm{a}!} |D^{\bm{a}} f(\bm{x})|\le 
\sum_{|\bm{a}|=n} 
\sum_{\bm{k} \in\Z^d}
\frac{|\bm{k}^{\bm{a}}|}{\bm{a}!} |\hat{f}_{\bm{k}}|
=\sum_{\bm{k} \in\Z^d}
\bigg(\sum_{|\bm{a}|=n} \frac{|\bm{k}^{\bm{a}}|}{\bm{a}!}\bigg) |\hat{f}_{\bm{k}}|
= \frac{1}{n!} \sum_{\bm{k} \in\Z^d} |\bm{k}|^n |\hat{f}_{\bm{k}}|.
$$
Setting $\,\|f\|_{n,1}=\sum_{\bm{k}\in\Z^d}|\bm{k}|^n|\hat{f}_{\bm{k}}|$,\, we conclude that, for
$\bm{x}+i\bm{y}\in W_\beta$,
\begin{align*}
|g(\bm{x}+\ii\bm{y})| &\leq \sum_{n=0}^\infty\frac{\beta^n}{n!}\sum_{\bm{k}\in\Z^d}|\bm{k}|^n|\hat{f}_{\bm{k}}|=
\sum_{n=0}^\infty\frac{\beta^n}{n!}\|f\|_{n,1}\leq
c_\lambda\sum_{n=0}^\infty\frac{\beta^n}{n!}\|f\|_{n+\lambda,\infty}\\ &\leq
c_\lambda M\sum_{n=0}^\infty\frac{\beta^n}{n!}(a(n+\lambda))^{n+\lambda}=
c_\lambda Ma^\lambda 
\sum_{n=0}^\infty\frac{(n+\lambda)!}{n!}(\beta a)^n\frac{(n+\lambda)^{n+\lambda}}{(n+\lambda)!},
\end{align*}
which converges whenever $\beta ae<1$. Above we used that, 
$\sum_{\bm{k}\in\Z^d\setminus\{0\}}\dfrac{1}{|\bm{k}|^\lambda}=
c_\lambda<\infty$, provided that $\lambda>d$, and hence for $n>0$, 
$$
\|f\|_{n,1}=\sum_{\bm{k}\in\Z^d\setminus\{0\}}|\bm{k}|^n|\hat{f}_{\bm{k}}|=\sum_{\bm{k}\in\Z^d\setminus\{0\}}\frac{1}{|\bm{k}|^\lambda}
|\bm{k}|^{n+\lambda}|\hat{f}_{\bm{k}}|\le \|f\|_{n+\lambda,\infty}\sum_{\bm{k}\in\Z^d\setminus\{0\}}\frac{1}{|\bm{k}|^\lambda}=
c_\lambda\|f\|_{n+\lambda,\infty}.
$$
Following tedious but straightforward calculations we obtain that the
series in \eqref{eq:series} defines a $C^\infty$ function in all its variables, which satisfies Cauchy-Riemann
equations, in $W_\beta$, provided $\beta a\mathrm{e}<1$. \hfill $\Box$

\medskip
\noindent The following Lemma was used in the proof of our main Theorem.
\begin{lemma}
\label{lem:2}
The following sums are uniformly bounded $($i.e., they have bounds independent of $k,\ell$$)$
\begin{subex}{2}
\item $\displaystyle
\underset{0<j+m<k+\ell}
{\sum_{j=0}^k\sum_{m=0}^\ell}\dfrac{(k+\ell)^3}{(j+m)^3(k-j+\ell-m)^3},$
\item $\displaystyle
\sum_{j=0}^k\sum_{m=1}^\infty
\dfrac{(k+\ell)^3}{(j+m)^3(k-j+\ell+m)^3},$
\item $\displaystyle
\underset{\vphantom{0<j<m}}{\sum_{j=1}^\infty\sum_{m=1}^\infty}
\dfrac{(k+\ell)^3}{(j+m)^3(k+j+\ell+m)^3},$
\item $\displaystyle
\sum_{j=1}^\infty\sum_{m=1}^\infty
\dfrac{(k+\ell)^3}{(j+\ell+m)^3(k+j+m)^3}.$
\end{subex}
\end{lemma}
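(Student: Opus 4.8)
The plan is to prove each of the four sums is uniformly bounded in $k,\ell$ by reducing the two-dimensional sums to one-dimensional ones via the substitution $n=j+m$, and then invoking the elementary fact that $\sum_{n=1}^\infty n^{-3}<\infty$ together with the convolution estimate
$$
\sum_{n=1}^{N-1}\frac{N^3}{n^3(N-n)^3}\,\le\, C
$$
uniformly in $N$, which is itself a standard consequence of splitting the sum at $n\le N/2$ and $n\ge N/2$ and using that one of the two factors is then comparable to $N^{-3}$.

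First I would handle sum (1). For fixed value of $n=j+m$ with $0<n<k+\ell$, the number of pairs $(j,m)$ with $0\le j\le k$, $0\le m\le\ell$ and $j+m=n$ is at most $n+1\le 2n$ (and also at most $k+\ell-n+1$, but the first bound suffices). Hence, writing $N=k+\ell$,
$$
\underset{0<j+m<k+\ell}{\sum_{j=0}^k\sum_{m=0}^\ell}\frac{N^3}{(j+m)^3(N-j-m)^3}
\,\le\,\sum_{n=1}^{N-1}\frac{2n\cdot N^3}{n^3(N-n)^3}
\,=\,2\sum_{n=1}^{N-1}\frac{N^3}{n^2(N-n)^3},
$$
which is bounded uniformly in $N$: for $n\le N/2$ one has $N-n\ge N/2$ so the summand is $\le 8/n^2$, and for $n\ge N/2$ one has $N^3/n^2\le 4N$ so the summand is $\le 4N/(N-n)^3$, and $\sum_{n\ge N/2}^{N-1}N/(N-n)^3=N\sum_{r=1}^{N/2}r^{-3}\le N\cdot\zeta(3)$ — wait, that last bound grows, so instead I would keep the extra factor: the cleaner route is to note $2n/(n^3(N-n)^3)\le 2/(n^2(N-n)^3)$ and bound $N^3\le 8(N-n)^3$ when $n\le N/2$ giving $\le 16/n^2$, while for $n\ge N/2$ use $N^3\le 8n^3$ to get $\le 16n/(n^3(N-n)^3)\le 16/(n^2(N-n)^3)\cdot n\le 16/(N-n)^3\cdot(2/N)\cdot n\le 16/(N-n)^3$ since $n\le N$; then $\sum_{n\ge N/2}16/(N-n)^3\le 16\zeta(3)$. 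So $R_1\le 16+16\zeta(3)$.

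For sums (2), (3) and (4) I would use the same idea but now one of the summation indices runs to infinity, so there is no $N-n$ factor forcing cancellation; instead the boundedness comes from the denominators already decaying fast enough. In (3), setting $n=j+m\ge 2$, the number of pairs is at most $n$, and $k+j+\ell+m=N+n\ge N$, so the summand is $\le n\cdot N^3/(n^3(N+n)^3)\le N^3/(n^2 N^3)=1/n^2$, whence $R_3\le\sum_{n\ge 2}n^{-2}<\infty$ with no dependence on $N$. Sum (2): put $n=j+m$; since $0\le j\le k$ we have $k-j+\ell+m=N-j+m\ge N-k+m=\ell+ (m)\ge m\ge 1$ and also $k-j+\ell+m\ge \ell+m\ge$ something — more robustly, $k-j+\ell+m\ge k-j+\ell+m$ and since $j\le k$, this is $\ge\ell+m$; combined with $k-j+\ell+m\ge N-n+2m$, a short case analysis (distinguishing $j+m\le N$ versus the geometry of the indices) gives the summand $\lesssim 1/(j+m)^3$ times a harmless factor, and summing over the at most $n+1$ pairs with $j+m=n$ yields $R_2\lesssim\sum_n 1/n^2<\infty$. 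Sum (4) is symmetric in structure to (2)–(3): with $j+\ell+m\ge\ell+m$ and $k+j+m\ge k+j$, the product of denominators dominates $(j+m)^3$ times $N^3$-type terms, and the same reduction applies. The main obstacle — and the only place genuine care is needed — is sum (2) (and its analog (4)), where the index $j$ appears with opposite signs in the two denominators: one must verify that the ``resonant'' region $k-j\approx 0$, i.e. $j\approx k$, does not spoil the bound, which works because there the second denominator is $\approx(\ell+m)^3$ and the product $(j+m)^3(\ell+m)^3\ge$ (something $\gtrsim n^3$) still beats $N^3$ after summing. Once all four bounds are in hand the Lemma follows, completing the justification of $R_1,R_2,R_3,R_4<\infty$ used in the proof of the Theorem. \hfill$\Box$
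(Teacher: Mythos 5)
Your reduction to one--dimensional diagonal sums is a legitimate strategy, but as written the treatment of sum (1) fails, and the failure is not merely cosmetic. Bounding the number of lattice points on the diagonal $j+m=n$ by $n+1\le 2n$ (the bound you declare ``suffices'') leads you to the quantity
\[
2\sum_{n=1}^{N-1}\frac{N^3}{n^2(N-n)^3},
\]
which is \emph{not} uniformly bounded: its single term $n=N-1$ already equals $2N^3/(N-1)^2\sim 2N$. Consequently the chain of inequalities you give for $n\ge N/2$ must contain an error, and it does: from $\frac{2N^3}{n^2(N-n)^3}$ the estimate $N^3\le 8n^3$ yields $\frac{16n}{(N-n)^3}$, not $\frac{16}{n^2(N-n)^3}$, so the claimed bound $\frac{16}{(N-n)^3}$ is off by a factor of order $n\sim N$. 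The repair is exactly the refinement you discarded: the multiplicity of the diagonal $j+m=n$ inside the rectangle $0\le j\le k$, $0\le m\le \ell$ is at most $\min(n,N-n)+1\le 2\min(n,N-n)$, which gives summands $\le 16/n^2$ for $n\le N/2$ and $\le 16/(N-n)^2$ for $n\ge N/2$, hence a uniform bound $32\zeta(2)$. Separately, your arguments for sums (2) and (4) are only sketches (``a short case analysis \dots gives the summand $\lesssim 1/(j+m)^3$ times a harmless factor''); since you yourself identify (2) as the one place where genuine care is needed, that care has to actually be supplied before the Lemma is proved.

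For comparison, the paper avoids lattice-point counting entirely by exploiting the exact identity $(j+m)+(k-j+\ell-m)=k+\ell$, so that
\[
\frac{(k+\ell)^3}{(j+m)^3(k-j+\ell-m)^3}=\Bigl(\frac{1}{j+m}+\frac{1}{k-j+\ell-m}\Bigr)^{3}\le\frac{4}{(j+m)^3}+\frac{4}{(k-j+\ell-m)^3},
\]
and then sums each piece as a two--dimensional series $\sum_{j,m}(j+m)^{-3}$, convergent because the exponent exceeds the dimension; for (2)--(4) it first enlarges $(k+\ell)^3$ to $(k+\ell+2m)^3$ (or the analogous quantity) so the same splitting applies, and the $m$-sum is controlled by $\sum_{m\ge m_0}m^{-3}\le 3(m_0-1)^{-2}$. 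If you keep your diagonal-counting route, adopt the $\min(n,N-n)$ multiplicity for (1) and write out the cases for (2) and (4) explicitly.
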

\begin{proof}
We will make use of the generalized mean inequality, stating that
for all $a,b,n\in\N$,
$$
(a+b)^n\leq 2^{n-1}(a^n+b^n).
$$
We will also use that for any $n,m_0\in\N$ with $n,m_0>1$,
$$
\sum_{m=m_0}^\infty\frac{1}{m^n}\leq \frac{n}{(m_0-1)^{n-1}}
$$
which follows easily by comparing the series with an appropriate integral.
For item (1) we have
\begin{align*}
\underset{0<j+m<k+\ell}
{\sum_{j=0}^k\sum_{m=0}^\ell}&\frac{(k+\ell)^3}{(j+m)^3(k-j+\ell-m)^3}=
\underset{0<j+m<k+\ell}
{\sum_{j=0}^k\sum_{m=0}^\ell}
\left(\frac{1}{j+m}+\frac{1}{k-j+\ell-m}\right)^3\\
&\leq\underset{0<j+m<k+\ell}
{\sum_{j=0}^k\sum_{m=0}^\ell}
\frac{4}{(j+m)^3}+\frac{4}{(k-j+\ell-m)^3}=
\underset{0<j+m<k+\ell}
{\sum_{j=0}^k\sum_{m=0}^\ell}\frac{8}{(j+m)^3}\\
&\leq\underset{0<j+m<k+\ell}
{\sum_{j=0}^\infty\sum_{m=0}^\infty}\frac{8}{(j+m)^3}\leq
C+\int\int_D\frac{8}{(x^2+y^2)^{\frac{3}{2}}}dxdy,
\end{align*}
where $C=\frac{8}{1}+\frac{8}{2}+\frac{8}{1}$ and $D$ is the area
$D=\{(x,y)\in\mathbb R^2:x,y\geq 0 \text{ and } x^2+y^2\geq 1\}.$

For item (2) we have that 
\begin{align*}
\sum_{j=0}^k\sum_{m=1}^\infty&
\frac{(k+\ell)^3}{(j+m)^3(k-j+\ell+m)^3}\leq
\sum_{j=0}^k\sum_{m=1}^\infty
\frac{(k+\ell+2m)^3}{(j+m)^3(k-j+\ell+m)^3}\\
&=\sum_{j=0}^k\sum_{m=1}^\infty
\left(\frac{1}{j+m}+\frac{1}{k-j+\ell+m}\right)^3\leq
\sum_{j=0}^k\sum_{m=1}^\infty
\frac{4}{(j+m)^3}+\frac{4}{(k-j+\ell+m)^3}\\
&\leq\sum_{j=0}^k\frac{12}{j^2}+\frac{12}{(k-j+\ell)^2}\leq
\sum_{j=0}^k\frac{12}{j^2}+\frac{12}{(k-j)^2}=
\sum_{j=1}^k\frac{24}{j^2}<4\pi^2.
\end{align*}

Similarly for item (3) we have that
\begin{align*}
\sum_{j,m=1}^\infty
\frac{(k+\ell)^3}{(j+m)^3(k+j+\ell+m)^3}\leq
\sum_{j,m=1}^\infty
\frac{8}{(j+m)^3}
\leq\sum_{j=1}^\infty\frac{24}{j^2}<4\pi^2,
\end{align*}
and finally, for item (4) we have that
\begin{align*}
\sum_{j,m=1}^\infty\frac{(k+\ell)^3}{(j+\ell+m)^3(k+j+m)^3}\leq
\sum_{j,m=1}^\infty\frac{8}{(j+m+1)^3}
\leq\sum_{j=1}^\infty\frac{24}{(j+1)^2}<4\pi^2.
\end{align*}

\end{proof}

\bibliographystyle{amsplain}

\end{document}